\newtoks\prt 
\numberwithin{equation}{section}
\newtheorem{thm}{Theorem}
\newtheorem{lemma}[thm]{Lemma}
\theoremstyle{definition} 
\newtheorem{remark}[thm]{Remark}
\def\eqn#1$$#2$${\begin{equation}\label#1#2\end{equation}}
\def\en{\mathbb N} 
\def\er{\mathbb R}
\def \Int {\operatorname{Int}}
\def\Ker{\operatorname{Ker}}
\def \reg {\partial _{\kern1pt\text{reg}}}
\newcommand{\cs}{\operatorname{cs}}
\newcommand{\cc}{\operatorname{cc}}
\begin{document}

\title{Note on Bessaga-Klee classification}
\author{Marek C\'uth and Ond\v{r}ej F.K. Kalenda}

\address{Department of Mathematical Analysis \\
Faculty of Mathematics and Physic\\ Charles University\\
Sokolovsk\'{a} 83, 186 \ 75\\Praha 8, Czech Republic}
\email{marek.cuth@gmail.com}
\email{kalenda@karlin.mff.cuni.cz}

\subjclass[2010]{52A07}
\keywords{closed convex body, homeomorphism of pairs, Bessaga-Klee classification, characteristic cone}

\thanks{Our research was supported in part by the grant GA\v{C}R P201/12/0290.}

\begin{abstract} 
We collect several variants of the proof of the third case of the Bessaga-Klee relative classification of closed convex bodies in topological vector spaces. We were motivated by the fact that we have not found anywhere in the literature a complete correct proof. In particular, 
we point out an error in the proof given in the book of C.~Bessaga and A.~Pe\l czy\'nski (1975). We further provide a simplified version of T.~Dobrowolski's proof of the smooth classification of smooth convex bodies in Banach spaces which works simultaneously in the topological case.
\end{abstract}
\maketitle


\section{Introduction}

A well-known result due to Bessaga and Klee (see, for example, \cite[Section III.6]{BePe}) provides a classification of pairs $(X,U)$, where $X$ is a Hausdorff topological vector space and $U\subset X$ a closed convex body, up to a homeomorphism. Let us recall this result.

Let $X$ be a Hausdorff topological vector space and $U\subset X$ a closed convex body (i.e., a closed convex set with nonempty interior). The  {\it characteristic cone} of $U$ (denoted by $\cc U$) is the set of those $x\in X$ such that the half-line $a+[0,+\infty)x$ is contained in $U$ for some $a\in U$. If $0\in\Int U$, then $\cc U$ is exactly the zero set of the Minkowski functional of $U$ (see, e.g., \cite[Section III.1]{BePe}).

Then the classification is summed up in the following theorem:

\begin{thm}\label{t:BK}
Let $X$ be a Hausdorff topological vector space and $U\subset X$ a closed convex body.
\begin{itemize}
	\item[(i)] If $\cc U$ is a linear subspace of finite codimension $m$, then the pair $(X,U)$ is homeomorphic to the pair $(\cc U\times \er^m,\cc U\times[0,1]^m)$.
	\item[(ii)] If $\cc U$ is a linear subspace of infinite codimension, then
	the pair $(X,U)$ is homeomorphic to the pair $(X,X^+)$, where $X^+$ is a closed half-space of $X$.
	\item[(iii)] If $\cc U$ is not a linear subspace, then the pair $(X,U)$ is homeomorphic also to the pair $(X,X^+)$.
\end{itemize}
\end{thm}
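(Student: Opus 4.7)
By translating I may assume $0 \in \Int U$, so the Minkowski functional $p$ of $U$ is a continuous nonnegative sublinear functional with $U = \{p \leq 1\}$ and $\cc U = \{p = 0\}$. Since $\cc U$ is a convex cone that is not a linear subspace, I can pick $x_0 \in X$ with $p(x_0) = 0 < p(-x_0)$, and rescale so that $p(-x_0) = 1$. Fix a closed topological complement $H$ of the line $\er x_0$, giving projections $\pi \colon X \to H$ and $\ell \colon X \to \er$ with $x = \pi(x) + \ell(x)\, x_0$; the target half-space is then $X^+ := \{\, x \in X : \ell(x) \geq 0\,\}$.

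I aim to realize the pair-homeomorphism as a shear along $x_0$,
\[ F(x) := x - g(\pi(x))\, x_0, \]
which is a self-homeomorphism of $X$ for any continuous $g \colon H \to \er$ (with inverse $x \mapsto x + g(\pi(x))\, x_0$) and which sends $U$ onto $X^+$ exactly when the fiber $\{\, t \in \er : y + t x_0 \in U\,\}$ equals $[g(y), \infty)$ for every $y \in H$. The natural candidate is $g(y) := \inf\{\, t \in \er : y + t x_0 \in U\,\}$: because $x_0 \in \cc U$ the fiber is upward-closed, and because $-x_0 \notin \cc U$ together with convexity the infimum is finite whenever the fiber is nonempty. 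Continuity of $g$ on its natural domain $\pi(U)$ then follows routinely from convexity of $U$ and continuity of $p$---upper semicontinuity from openness of $\Int U$, lower semicontinuity from closedness of $U$.

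The main difficulty---and, according to the authors, the step where the argument in \cite{BePe} goes wrong---is that $\pi(U)$ need not be all of $H$: some lines $y + \er x_0$ may miss $U$ entirely, so $g$ fails to be globally defined. This pathology is absent when $x_0 \in \Int \cc U$, but $\Int \cc U$ may well be empty. To get around it I would work in two stages: first produce a preliminary pair-homeomorphism $(X, U) \simeq (X, V)$ onto a larger closed convex body $V \supseteq U$ whose characteristic cone contains an open neighbourhood of $x_0$, guaranteeing $\pi(V) = H$, and then apply the shear construction to $V$. Carrying out this preliminary deformation rigorously, while preserving the pair structure, is the decisive technical step.
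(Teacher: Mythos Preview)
Your strategy is genuinely different from the paper's, but it is incomplete, and the gap you leave open is essentially the whole problem.

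The paper never attempts a shear along the distinguished direction. Instead it parametrizes the hyperplane $Z=\varphi^{-1}(-1)$ and, for each $z\in Z$, chooses a center $c(z)$ on the ray $[0,\infty)y$ by an explicit formula (several variants are given, e.g.\ $c(z)=w_W(z+y)y$ for an auxiliary body $W$). The sought homeomorphism is built from self-maps of the halflines $c(z)+(0,\infty)(z-c(z))$ stretching the segment $(c(z),u(z)]$ onto $(c(z),z]$. The crucial work is showing that these halflines are pairwise disjoint, cover everything except a set contained in $\Int U$ on which one may take the identity, and that the assignments $x\mapsto z(x)$, $x\mapsto\alpha(x)$ are continuous; this is done by solving an explicit quadratic. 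The error in \cite{BePe} is not about a shear at all: it is that their particular formula $c(z)=(w_U(z+y)-1)y$ allows $c(z)=-y\in\partial U$, making $z\mapsto u(z)$ discontinuous.

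Your two-stage plan pushes the entire difficulty into the ``preliminary deformation'' $(X,U)\simeq(X,V)$ with $x_0\in\Int\cc V$, which you do not construct. There is no reason to expect this step to be easier than the original statement: producing a pair-homeomorphism between two closed convex bodies in a general Hausdorff TVS is exactly the kind of result Theorem~\ref{t:BK} is meant to supply, and the standard route to such a deformation is precisely the radial-map machinery above. So as written the proposal is a reduction to an assertion of the same strength, not a proof. A smaller point: the existence of a closed complement $H$ of $\er x_0$ is not automatic in a non-locally-convex TVS; you need to produce the continuous functional $\ell$ first, e.g.\ by extending $t x_0\mapsto -t$ under the domination $\psi\le p$ via Hahn--Banach and noting that $|\psi|\le 1$ on the neighbourhood $U\cap(-U)$ of $0$, as the paper does.
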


We have studied this result at a seminar with students using the book \cite{BePe} and we encountered a difficulty with proving the assertion (iii). 
On page 112 of that book a formula is given, illustrated by a picture and followed by the claim that `it is not difficult to check' that this gives the required homeomorphism. After certain effort we realized that this claim is not true -- the given formula need not provide a homeomorphism. It is explained in Section~\ref{s:BP} below.

After finding the error we tried to correct it and to look at the literature for a correct proof. The original reference for the result is the paper \cite{BeKl}. However, this paper does not contain explicit formulation of the theorem. The desired statement is a special case of the more general \cite[Lemma 1.3]{BeKl}.
And again, in its proof a formula is described and followed by the claim that `it is tedious but not difficult to verify' that the formula gives the desired homeomorphism. In this case the claim is correct. In fact, the proof is not even too tedious. In Section~\ref{s:BK} we describe this method applied directly to the case of the above theorem. 

Before finding and analyzing the original paper we established a correction 
of the proof from \cite{BePe}. This correction is described in Section~\ref{s:C1}.
It is quite complicated, but we think it contains several interesting features.
Later, after analyzing the original method we got an idea that the error in \cite{BePe} is probably due to a misprint. And really, this yields the proof described in Section~\ref{s:C2}. The proof is a bit more complicated than the original one.

Finally, we found the paper \cite{Dob} where an analogous classification of $C^p$-smooth convex bodies in Banach spaces up to a $C^p$-diffeomorphism is given. As a special case $p=0$ the homeomorphic classification is given. The proof of the case (iii) takes only half a page. It refers to the implicit function theorem  
\cite[Theorem 10.2.5]{Dd}. However, the key parts of the proof are missing (for example the proof that the respective maps are bijections and the proof that the Fr\'echet differential at each point is an onto isomorphism). Further, there is 
one small mistake in the definition of one of the important sets. 
In Section~\ref{s:D} below we give a proof using the method of \cite{Dob} for
the homeomorphism case. Under the additional smoothness assumptions the same proof
provides the classification up to a diffeomorphism. Further, our proof is more elementary, since it uses only a simple version of the implicit function theorem (see Theorem~\ref{IFT} below).
 
In view of this situation we decided to write down several variants of the proof because we think that such a result deserves it.

\medskip

Let us fix some notation. We adopt the notation of \cite{BePe}, the notation in the other two works is different. 

If $U$ is a convex set containing $0$ in its interior, we denote by $w_U$ the Minkowsi functional of $U$. Further, $\cs U$ is the set of those $x\in U$ such that the line $a+\er x$ is contained in $U$ for some $a\in X$. In other words,
$\cs U=\cc U\cap \cc(-U)$.

\section{The basic method of the proof}

We will review below several possibilities of proving the assertion (iii) of Theorem~\ref{t:BK}. Not surprisingly, all the proofs follow the same pattern.
Let us describe this general pattern.

Let $U\subset X$ be a closed convex body such that $\cc U$ is not a linear subspace. It means that there is $y\in \cc U$ such that $-y\notin\cc U$. Without loss of generality we may suppose that $0\in \Int U$. Then $[0,+\infty)y\subset U$, $(-\infty,0]y\not\subset U$ and there is some $\varepsilon>0$ such that $(-\varepsilon,0]y\subset U$. Hence, without loss of generality we may suppose that $-y\in\partial U$. If we define a linear functional on $\er y$ by the formula $\psi_0(ty)=-t$, then $\psi_0(ty)\le w_U(ty)$ for each $t\in\er$. So, Hahn-Banach theorem implies that there is a linear functional $\psi$ on $X$ extending $\psi_0$ such that $\psi(x)\le w_U(x)$ for each $x\in X$. Set
$\varphi=-\psi$. Then $\varphi$ is a linear functional on $X$ such that
$\varphi(-y)=-1$ and $\varphi(x)\ge -1$ for $x\in U$. In particular, $|\varphi(x)|\le 1$ on $U\cap (-U)$, so $\varphi$ is continuous. 
Set $Z=\{x\in X:\varphi(x)=-1\}$.

Now, a basic method of constructing a homeomorphism of the pair $(X,U)$ onto the pair $(X,\varphi^{-1}([-1,+\infty))$ is the following: To any $z\in Z$ assign some $c(z)\in [-1,+\infty)y$. Let $u(z)$ be the last point at the segment $[c(z),z]$ contained in $U$ and let $v(z)$ be a suitable point at the segment $(c(z),u(z))$.
Next, we choose a self-homeomorphism $h_z$ of the halfline $c(z)+(0,+\infty)(z-c(z))$ which is identity on the segment $(c(z),v(z)]$ and the segment $[v(z),u(z)]$ is mapped onto the segment $[v(z),z]$. Finally define the global homeomorphism $H$ by $h_z$ at the respective halfline and by the identity at the points not covered by such halflines.



\includegraphics{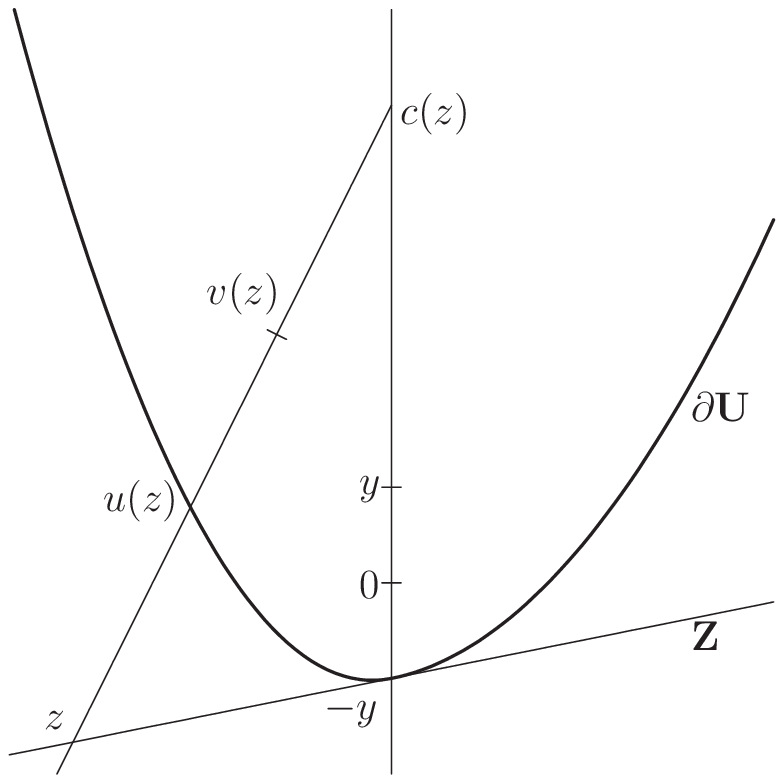}

\vskip60mm 

Then a proof that $H$ is indeed a homeomorphism requires three steps:
\begin{itemize}
	\item $H$ is well-defined (i.e., the respective halflines do not intersect).
	\item $H$ is a self-homeomorphism of the union of the halflines.
	\item $H$ remains homeomorphism if glued with the identity.
\end{itemize}

The proofs appearing in the literature differ in the formula for $c(z)$, the choice of $v(z)$ and the definition of $h_z$.

An important part of the proof (namely of the second step) consists in using the following easy lemma.

\begin{lemma}\label{l:Mink} Let $U\subset X$ be a closed convex body. Then the mapping
$$(u,v)\mapsto w_{U-u}(v)$$
is continuous on $\Int U\times X$.
\end{lemma}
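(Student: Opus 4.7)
The plan is to verify continuity at an arbitrary point $(u_0,v_0)\in \Int U\times X$. Set $a:=w_{U-u_0}(v_0)\ge 0$ and, for a prescribed $\ep>0$, look for an open neighborhood of $(u_0,v_0)$ inside $\Int U\times X$ on which $|w_{U-u}(v)-a|<\ep$. The basic observation driving everything is the equivalence
$$w_{U-u}(v)\le t \iff \tfrac{v}{t}+u\in U \qquad (t>0,\ u\in\Int U),$$
whose nontrivial direction uses that $U$ is closed.

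The first (upper bound) step is to show that $\tfrac{v_0}{a+\ep}+u_0\in \Int U$. This follows by writing this point as a convex combination of the interior point $u_0$ and a point of the form $\tfrac{v_0}{r}+u_0$ for a suitably chosen $r\in(a,a+\ep)$: the condition $a\le r$ guarantees by the boxed equivalence that the latter point lies in $U$, and the combination puts strictly positive weight on $u_0$. Since the affine map $(u,v)\mapsto \tfrac{v}{a+\ep}+u$ is continuous, the preimage of $\Int U$ is an open neighborhood $W_1$ of $(u_0,v_0)$, and the equivalence gives $w_{U-u}(v)\le a+\ep$ throughout $W_1\cap(\Int U\times X)$.

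The second (lower bound) step, which is nontrivial only when $a>\ep$, uses that $\tfrac{v_0}{a-\ep}+u_0\notin U$ by the very definition of $a$. Since $U$ is closed, the same continuous affine map pulls back the open set $X\setminus U$ to an open neighborhood $W_2$ of $(u_0,v_0)$ on which $\tfrac{v}{a-\ep}+u\notin U$; the equivalence then yields $w_{U-u}(v)\ge a-\ep$ on $W_2$. Intersecting $W_1$, $W_2$ and $\Int U\times X$ produces the required neighborhood.

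The main (though mild) obstacle is the degenerate case $a=0$, which occurs exactly when $v_0\in \cc(U-u_0)=\cc U$ and prevents one from naively dividing by $a$. The workaround is already built into the upper-bound step: one chooses a witness $r\in(0,a+\ep)$ rather than $r=a$, and the identity $w_{U-u_0}(v_0)=0$ still supplies $\tfrac{v_0}{r}+u_0\in U$, so the convex-combination argument goes through uniformly in the two cases $a>0$ and $a=0$.
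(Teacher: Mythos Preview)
Your proof is correct and uses essentially the same idea as the paper: both exploit that for $t>0$ the condition $w_{U-u}(v)\le t$ (resp.\ $<t$, $>t$) translates into membership of $\frac{v}{t}+u$ in $U$ (resp.\ $\Int U$, $X\setminus U$), and then pull back these open or closed sets under the continuous affine map $(u,v)\mapsto \frac{v}{t}+u$. The paper packages this slightly more efficiently by showing directly that every sublevel set $\{w_{U-u}(v)<c\}$ and every superlevel set $\{w_{U-u}(v)>c\}$ is open (using the identity $\{v:w_{U-u}(v)<c\}=c\,\Int(U-u)$ for $c>0$), which avoids fixing a base point and the auxiliary convex-combination step you use for the upper bound.
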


\begin{proof} Let $c\in\er$ be arbitrary. We will show that the sets
$$\{(u,v)\in\Int U\times X:w_{U-u}(v)<c\}\mbox{ and }\{(u,v)\in\Int U\times X:w_{U-u}(v)>c\}$$
are open.

If $c\le 0$, then the first set is empty. For $c>0$ the inequality $w_{U-u}(v)<c$ is equivalent to $v\in c\Int(U-u)$, so $v+cu\in \Int U$. It follows that the first set is in this case open.

The second set equals $\Int U\times X$ for $c<0$. For $c=0$ it equals
$\Int U\times(X\setminus\cc U)$. Finally, for $c> 0$
the inequality $w_{U-u}(v)>c$ is equivalent to $v\notin c(U-u)$, i.e., 
$v+cu\in X\setminus cU$. In any case the second set is open as well.
\end{proof}

\section{Several variants of the proof}

In this section we collect several variants of the proof. We start by the original 
proof which is hidden in \cite{BeKl}, then we continue by explaining why the proof in \cite{BePe} is incorrect and suggest two possible corrections. 

\subsection{The original proof}\label{s:BK}

As we have remarked above, the paper \cite{BeKl} in fact do not contain explicit formulation of the theorem. 
But the result follows from a more general Lemma 1.3. Let us give the proof to see that it is really easy, 
if properly formulated.

Fix a closed convex body $V$ such that $[0,+\infty)y\subset \Int V\subset V\subset \Int U$.
For example, one can take $V=\frac12U$ or $V=\frac y2+U$. Set $W=V\cap(-V)\cap\Ker\varphi$. Then $W$ is a closed convex body in $\Ker\varphi$ and, moreover, $\cc W=\cs W=\cs V$.

For $z\in Z$ define $c(z)=w_W(z+y)y$ and let $v(z)$ be the last point of the segment $[c(z),z]$ contained in $V$. The homeomorphism $h_z$ is defined as identity on the segment $(c(z),v(z)]$, on $[v(z),u(z)]$ as the affine transformation sending this segment to $[v(z),z]$ and on the halfline $u(z)+(0,+\infty)(z-c(z))$ as a translation.

\includegraphics{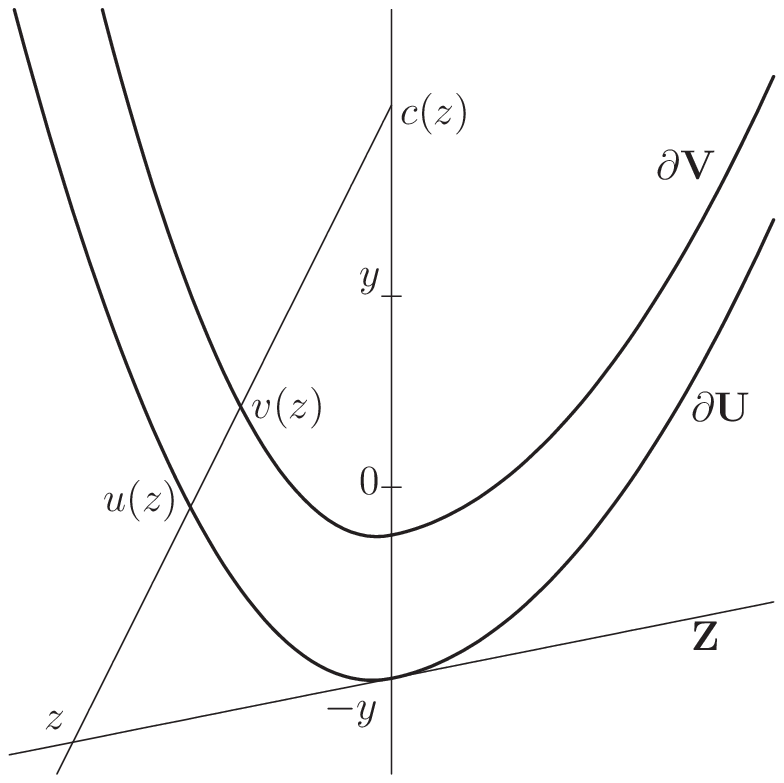}

\vskip60mm 
The proof that the glued mapping $H$ is a homeomorphism has three steps:

\medskip

\noindent{\sc Step 1:} The halflines $c(z)+(0,+\infty)(z-c(z))$, $z\in Z$, are pairwise disjoint and their union is the set $X\setminus (\cs V+[0,+\infty)y)$.

\smallskip

Let $x\in X$. Let us find out under which conditions there is $z\in Z$ such that $$x\in c(z)+(0,+\infty)(z-c(z)),$$ i.e., there are $z\in Z$ and $\alpha>0$ such that
\begin{equation}\label{eq:zero}x=c(z)+\alpha(z-c(z)).\end{equation}
This equation is equivalent to
\begin{equation}\label{eq:first}(x-\varphi(x) y) + \varphi(x) y
= \alpha (z+y) + ((1-\alpha) w_W(z+y)-\alpha) y.\end{equation}
Applying the functional $\varphi$ to both sides of this equation we get
\begin{equation}\label{eq:second}x-\varphi(x)y=\alpha(z+y)\quad \&\quad \varphi(x)=(1-\alpha) w_W(z+y)-\alpha.\end{equation}
 More precisely, applying $\varphi$ to \eqref{eq:first} we get the second equation and plugging it into \eqref{eq:first} we get the first equation.
If we plug $z+y=\frac1\alpha(x-\varphi(x)y)$ to the second equation,
we get the quadratic equation
$$\alpha^2+\alpha(\varphi(x)+w_W(x-\varphi(x)y))-w_W(x-\varphi(x)y)=0.$$
If $w_W(x-\varphi(x)y)>0$, then this equation has one positive root and one negative root. Denote the positive root by $\alpha(x)$.
If $w_W(x-\varphi(x)y)=0$ and $\varphi(x)<0$, then the equation has one root equal to zero and the other one $\alpha(x)=-\varphi(x)>0$. If 
$w_W(x-\varphi(x)y)=0$ and $\varphi(x)\ge 0$, the equation has no positive root.

Since the conditions $w_W(x-\varphi(x)y)=0$ and $\varphi(x)\ge 0$
hold if and only if $x\in\cs V+[0,+\infty)y$ we get that the $\alpha$ in \eqref{eq:zero} is always unique and it follows from the first equation in \eqref{eq:second} that the corresponding $z$ is also unique. We denote it by $z(x)$. This finishes the proof of Step 1.
Moreover, the above calculation shows that the mappings $x\mapsto z(x)$ and $x\mapsto \alpha(x)$ are continuous on $X\setminus(\cs V+[0,+\infty)y)$.

\medskip

\noindent{\sc Step 2.} $H$ is a homeomorphism of $X\setminus(\cs V+[0,+\infty)y)$ onto itself.

\smallskip

It is clear that $H$ is a bijection of $X\setminus(\cs V+[0,+\infty)y)$ onto itself. So, it is enough to show that $H$ and $H^{-1}$ are continuous on $X\setminus(\cs V+[0,+\infty)y)$. This can be done using Lemma~\ref{l:Mink}
and continuity of $z(x)$ and $\alpha(x)$. 

More precisely, let us define $F$, a function of four real variables, on the set $$M=\{(\alpha,\beta,\gamma,\delta)\in(0,+\infty)^4: \gamma>\beta\ \&\ \delta>\beta\}$$  by the formula
$$F(\alpha,\beta,\gamma,\delta)=\begin{cases} \alpha & 0<\alpha\le\beta,\\
\beta+ \frac{\delta-\beta}{\gamma-\beta}(\alpha-\beta) &  \beta\le\alpha\le\gamma,\\ \alpha+\delta-\gamma & \gamma\le\alpha.
\end{cases}$$
This function is continuous on its domain, since all the three formulas are continuous, their domains are relatively closed and the formulas agree on the intersections of their domains.

Further, 
$$\begin{aligned} H(x) = &c(z(x))+ F(\alpha(x),\tfrac1{w_{V-c(z(x))}(z(x)-c(z(x)))},
\tfrac1{w_{U-c(z(x))}(z(x)-c(z(x)))},1)\,\cdot\\&
\qquad\qquad\qquad\qquad\qquad\qquad\qquad\qquad\qquad\quad\qquad\qquad\cdot	(z(x)-c(z(x)),\\
H^{-1}(x) = &c(z(x))+ F(\alpha(x),\tfrac1{w_{V-c(z(x))}(z(x)-c(z(x)))},
1,\tfrac1{w_{U-c(z(x))}(z(x)-c(z(x)))})\,\cdot\\&
\qquad\qquad\qquad\qquad\qquad\qquad\qquad\qquad\qquad\quad\qquad\qquad\cdot(z(x)-c(z(x)),
\end{aligned}$$
so both $H$ and $H^{-1}$ are continuous.

\medskip

\noindent{\sc Step 3.}  $H$ is a homeomorphism of $X$ onto itself.

\smallskip

Since $\cs V+[0,+\infty)y\subset\cc V\subset \Int V$ and $H$ is the identity on $V\setminus(\cs V+[0,+\infty)y)$, the global continuity of $H$ and $H^{-1}$ follows.

\begin{remark} Lemma 1.3 in \cite{BeKl} we have mentioned above is more general. It deals with homeomorphisms of triples, not pairs. To the set $V$ from \cite{BeKl} corresponds our set $U$, the sets $U$ and $P$ from \cite{BeKl} in our case coincide both with $V$.
The `tedious but not difficult' part skipped in \cite{BeKl} corresponds to our Steps 1 and 2. It is clear that the computation is not difficult, but especially Step 1 probably cannot be seen without a computation.
\end{remark} 

\subsection{The incorrect proof in \cite{BePe}}\label{s:BP}

On page 112 of the quoted book the authors suggest the formulas $c(z)=(w_U(z+y)-1)y$ and $v(z)=\frac12(u(z)+c(z))$. Further, $h_z$ is defined as the identity on $(c(z),v(z)]$ and on the halfline $v(z)+[0,+\infty)(z-c(z))$ as an affine mapping fixing $v(z)$ and taking $u(z)$ to $z$. 

We shall see that these
formulas do not provide a homeomorphism. The problem is that if $z+y\in\cc U$, we get $c(z)=-y$. In such a case $u(z)$ should be defined to be $z$ and already the mapping $z\mapsto u(z)$ may fail to be continuous. 

Let us describe a counterexample. Set $X=\er^3$ and 
$$U=\{(x_1,x_2,x_3)\in\er^3: x_1\ge (x_2)^+-1\ \&\ x_1\ge (x_3)^+-1\}.$$
Then $0\in\Int U$ and one can choose $y=(1,0,0)$ and $$Z=\{(x_1,x_2,x_3)\in\er^3:x_1=-1\}.$$

Let $x=(-1,-1,0)$. Then $x\in Z$, $w_U(x+y)=0$, hence $c(x)=-y$, $u(x)=x$ and hence $H(x)=x$.

Further, for any $n\in\en$ let $x_n=(-1,-1,\frac1n)$. Then $x_n\in Z$, $w_U(x_n+y)=\frac1n$, hence $c(x_n)=(-1+\frac1n,0,0)$. Further, 
$u(x_n)=(-1+\frac1{2n},-\frac12,\frac1{2n})$ as this point belongs is  the intersection of the boundary of $U$ with the segment $[c(x_n),x_n]$. Hence $v(x_n)=(-1+\frac3{4n},-\frac14,\frac1{4n})$ and 
$H(x_n)= v(x_n)+3(x_n-v(x_n))=(-1-\frac3{2n},-\frac52,\frac5{2n})$.

Since $x_n\to x$ and $H(x_n)\to(-1,-\frac52,0)\ne H(x)$, $H$ is not continuous.

\subsection{Correction of the proof -- version 1}\label{s:C1}

In this section we present a possible correction of the proof from \cite{BePe}.
We change the formula for $c(z)$ with preserving the remaining assumptions. 
Let us set $c(z)=(\sqrt{w_U(z+y)}-1)y$.

In this case the equality $c(z)=-y$ remains possible, but the square root changes certain order of convergence and makes the respective mappings continuous. This version of the proof is the most complicated one but we find it interesting.
So, let us give a proof.

\medskip

\noindent{\sc Step 1.} Set $Z'=\{z\in Z: w_U(z+y)>0\}$. Then the halflines $c(z)+(0,+\infty)(z-c(z))$, $z\in Z'$, are disjoint and cover the set $\{x\in X: w_U(x-\varphi(x)y)>0\}$.

\smallskip

Let $x\in X$. We will find out under which conditions there is $z\in Z'$ and $\alpha>0$ such that $$x=c(z)+\alpha(z-c(z)).$$
This equation is equivalent to
$$(x-\varphi(x) y)+\varphi(x) y 
= \alpha(z+y) + ((1-\alpha)(\sqrt{w_U(z+y)}-1)-\alpha) y.$$
So, by applying $\varphi$ to both sides we get (similarly as in ``\eqref{eq:first}$\implies$\eqref{eq:second}'' above)
\begin{equation}\label{eq:fourth}x-\varphi(x)y=\alpha(z+y) \quad \&\quad \varphi(x)= (1-\alpha)(\sqrt{w_U(z+y)}-1)-\alpha.\end{equation}
From the first equation it follows that $w_U(x-\varphi(x)y)>0$
if we want $z\in Z'$. Further, if we isolate $z+y$ from the first equation 
and plug the result into the second one, we get
$$\alpha\sqrt{w_U(x-\varphi(x)y)}+ \sqrt{\alpha}(\varphi(x)+1)-\sqrt{w_U(x-\varphi(x)y)}=0.$$
This is a quadratic equation for $\sqrt\alpha$ with a unique positive root $\alpha=\alpha(x)$. Hence, by the first equation in \eqref{eq:fourth}, there is a unique $z=z(x)$.

This completes the proof of Step 1. Moreover, the computation shows that the mappings $x\mapsto \alpha(x)$ and $x\mapsto z(x)$ are continuous on
$\{x\in X: w_U(x-\varphi(x)y)>0\}$.

\medskip

\noindent{\sc Step 2.} $H$ is a homeomorphism of $\{x\in X: w_U(x-\varphi(x)y)>0\}$ onto itself.

\smallskip

It is clear that $H$ is a bijection of the respective set onto itself. It remains to show that $H$ and $H^{-1}$ are continuous.

Let us define two functions of two real variables on the set $\er\times(0,2)$
by the formulas
$$\begin{aligned} G_1(\alpha,\beta)&=\begin{cases} \alpha & \alpha\le \frac\beta2,\\
\frac\beta2+\frac{2-\beta}{\beta}(\alpha-\frac\beta2) & \alpha\ge\frac\beta2,\end{cases}\\
G_2(\alpha,\beta)&=\begin{cases} \alpha & \alpha\le \frac\beta2,\\
\frac\beta2+\frac{\beta}{2-\beta}(\alpha-\frac\beta2) & \alpha\ge\frac\beta2.\end{cases}\end{aligned}$$
These functions are clearly continuous (the individual formulas are continuous, coincide on the intersection of the domains and the domains are relatively closed). Further, for $x\in\{x\in X: w_U(x-\varphi(x)y)>0\}$ we have
$$\begin{aligned}
H(x)&=c(z(x))+G_1(\alpha(x),\frac1{w_{U-c(z(x))}(z(x)-c(z(x)))})(z(x)-c(z(x))),\\ 
H^{-1}(x)&=c(z(x))+G_2(\alpha(x),\frac1{w_{U-c(z(x))}(z(x)-c(z(x)))})(z(x)-c(z(x))).\end{aligned}$$
It follows from Lemma~\ref{l:Mink} using the continuity of mappings $x\mapsto\alpha(x)$, $x\mapsto z(x)$ and $z\mapsto c(z)$ and the fact that $c(z(x))\in \Int U$ in this case that $H$ and $H^{-1}$ are continuous.
 
\medskip

\noindent{\sc Step 3:} $H$ is a homeomorphism of $X$ onto itself.

\smallskip

On the set $\{x\in X: w_U(x-\varphi(x)y)=0\}$ the mapping $H$ is defined to be identity. Since this set is closed, it is enough to show that whenever $x_\tau$ is a net in $\{x\in X: w_U(x-\varphi(x) y)>0\}$ such that $x_\tau\to x$ with $w_U(x-\varphi(x)y)=0$, then $H(x_\tau)\to x$ and $H^{-1}(x_\tau)\to x$.

So, let $(x_\tau)$ be such a net. Let us decompose the index set into two parts:
$$\begin{aligned}\Lambda_1&=\{\tau: w_{U-c(z(x_\tau))}(z(x_\tau)-c(z(x_\tau)))\le\frac1{2\alpha(x_\tau)}\},\\
\Lambda_2&=\{\tau: w_{U-c(z(x_\tau))}(z(x_\tau)-c(z(x_\tau)))>\frac1{2\alpha(x_\tau)}\}.\end{aligned}$$
For $\tau\in\Lambda_1$ we have $H(x_\tau)=H^{-1}(x_\tau)=x_\tau$, so it remains to show that the limit along $\Lambda_2$ is also $x$, provided $\Lambda_2$ is cofinal. Without loss of generality we may assume that $\Lambda_1=\emptyset$, i.e.
$$w_{U-c(z(x_\tau))}(z(x_\tau)-c(z(x_\tau)))>\frac1{2\alpha(x_\tau)}\mbox{\qquad for all }\tau.$$

Let us further compute the limit of $c(z(x_\tau))$. We have
$$c(z(x_\tau))=(\sqrt{w_U(z(x_\tau)+y)}-1)y =
(\sqrt{\frac{w_U(x_\tau-\varphi(x_\tau)y)}{\alpha(x_\tau)}}-1)y.$$
Since
$$\begin{aligned}\lim_\tau \sqrt{\frac{w_U(x_\tau-\varphi(x_\tau)y)}{\alpha(x_\tau)}}&=
\lim_\tau  \frac{{\sqrt{w_U(x_\tau-\varphi(x_\tau)y)}}\cdot 2\sqrt{w_U(x_\tau-\varphi(x_\tau)y)}}{-(\varphi(x_\tau)+1)+\sqrt{(\varphi(x_\tau) +1)^2+4w_U(x_\tau-\varphi(x_\tau)y)}}
\\&=\lim_\tau \frac12((\varphi(x_\tau)+1)+\sqrt{(\varphi(x_\tau)
+1)^2+4w_U(x_\tau-\varphi(x_\tau)y)})
\\ &= \frac12((\varphi(x)+1)+\sqrt{(\varphi(x)
+1)^2+4w_U(x-\varphi(x)y)})
\\& =(\varphi(x)+1)^+,
\end{aligned}$$
we get $c(z(x_\tau))\to ((\varphi(x)+1)^+-1)y=\max(\varphi(x),-1)y$.

If $\varphi(x)>-1$, then $c(z(x_\tau))\to\varphi(x)y\in\Int U$, hence by Lemma~\ref{l:Mink} we get 
$$w_{U-c(z(x_\tau))}(x_\tau-c(z(x_\tau)))\to w_{U-\varphi(x)y}(x-\varphi(x)y)=0$$
and hence $w_{U-c(z(x_\tau))}(x_\tau-c(z(x_\tau)))<\frac12$ for large $\tau$. It means that for large $\tau$ we have $\tau\in\Lambda_1$, a contradiction.

Thus $\varphi(x)\le -1$. Then $c(z(x_\tau))\to -y$.   
We will show that $$w_{U-c(z(x_\tau))}(z(x_\tau)-c(z_(x_\tau)))\to 1.$$ Suppose it is not the case. Since 
$w_{U-c(z)}(z-c(z))\ge 1$ for each $z\in Z'$, up to passing to a subnet we may assume that there is some $d>1$ such that $$w_{U-c(z(x_\tau))}(z(x_\tau)-c(z(x_\tau)))>d\mbox{ for each }\tau.$$ It means that
$z(x_\tau)-c(z(x_\tau))\notin d(U-c(z(x_\tau)))$, hence
$$\frac1dz(x_\tau) + (1-\frac1d) c(z(x_\tau))\notin U\mbox{ for each }\tau.$$
So,
\begin{multline*}\frac{w_U(z(x_\tau)+y)}d \cdot \frac{z(x_\tau)+y}{w_U(z(x_\tau)+y)}
+(1-\frac{w_U(z(x_\tau)+y)}d)\cdot(-y) \\
+((1-\frac1d)\sqrt{w_U(z(x_\tau)+y)} - \frac{w_U(z(x_\tau)+y)}d)y\notin U.\end{multline*}
Since $w_U(z(x_\tau)+y)\to 0$ the sum of the first two terms is for $\tau$ large enough a convex combination of $\frac{z(x_\tau)+y}{w_U(z(x_\tau)+y)}$ and $-y$, hence it belongs to $U$. Further, the coefficient at the last term is positive for $\tau$ large enough (this is the place where the choice of the square root is essential) which yields a contradiction as $y\in\cc U$.
Thus indeed $w_{U-c(z(x_\tau))}(z(x_\tau)-c(z_(x_\tau)))\to 1$.

Now we are ready to conclude.  
To shorten the notation, set $\alpha_\tau=\alpha(x_\tau)$ and $w_\tau= 	w_{U-c(z(x_\tau))}(z(x_\tau)-c(z(x_\tau)))$, Since $\alpha_\tau>\frac1{2w_\tau}$, we have

$$\begin{aligned}
H(x_\tau)&=c(z(x_\tau))+G_1(\alpha_\tau,\frac1{w_\tau})(z(x_\tau)-c(z(x)))\\ 
&=c(z(x_\tau))+(\alpha_\tau(2w_\tau-1)-1+\frac1{w_\tau})(z(x_\tau)-c(z(x_\tau)))
\\&=c(z(x_\tau))+(2w_\tau-1-\frac1{\alpha_\tau}+\frac1{\alpha_\tau w_\tau})(x_\tau-c(z(x_\tau)))\\
&=2c(z(x_\tau))(1-w_\tau)+x_\tau(2w_\tau-1)+\frac{1-w_\tau}{\alpha_\tau w_\tau}(x_\tau-c(z(x_\tau)))\to x\end{aligned}$$
since $x_\tau\to x$, $c(z(x_\tau))\to-y$, $w_\tau\to 1$ and $\alpha_\tau w_\tau>\frac12$.

Similarly,
$$\begin{aligned}
H^{-1}(x_\tau)&=c(z(x_\tau))+G_2(\alpha_\tau,\frac1{w_\tau})(z(x_\tau)-c(z(x)))\\ 
&=c(z(x_\tau))+(\frac{\alpha_\tau}{2w_\tau-1}+\frac{w_\tau-1}{w_\tau(2w_\tau-1)})(z(x_\tau)-c(z(x_\tau)))
\\&=c(z(x_\tau))+(\frac{1}{2w_\tau-1}+\frac{w_\tau-1}{\alpha_\tau w_\tau(2w_\tau-1)})(x_\tau-c(z(x_\tau)))\to x.\end{aligned}$$

This completes the proof.

\subsection{Correction of the proof -- version 2}\label{s:C2}

Another possibility how to correct the proof is to use the formula $c(z)=(w_U(z+y)+1)y$ for $z\in Z$.
In this case the problem appearing in the original version and in the first correction disappears, since $c(z)\in \Int U$ for all $z\in Z$. Let us show that this modification works.

\medskip

\noindent{\sc Step 1.} The halflines $c(z)+(0,+\infty)(z-c(z))$, $z\in Z$, are pairwise disjoint and their union is the set $X\setminus ((\Ker\varphi\cap\cc U)+[1,+\infty)y)$.

\smallskip

Let $x\in X$. Let us find out under which conditions there are $z\in Z$ and $\alpha>0$ such that
$$x= c(z)+\alpha(z-c(z)).$$
This equation is equivalent to
$$(x-\varphi(x)y)+\varphi(x)y=\alpha(z+y)+((1-\alpha)(w_U(z+y)+1)-\alpha)y.$$
By applying $\varphi$ to both sides we see that the above equation  is equivalent to (similarly as in ``\eqref{eq:first}$\implies$\eqref{eq:second}'' above)
\begin{equation}\label{eq:fifth}x-\varphi(x)y=\alpha(z+y)\quad\&\quad\varphi(x)= (1-\alpha)(w_U(z+y)+1)-\alpha.\end{equation}
From the first equation isolate $z+y$ and plug it to the second equation. We get thus
a quadratic equation for $\alpha$:
$$2\alpha^2+\alpha(\varphi(x)-1+w_U(x-\varphi(x)y))-w_U(x-\varphi(x)y)=0.$$
If  $w_U(x-\varphi(x)y)>0$, there is a unique positive root $\alpha=\alpha(x)$.
If $w_U(x-\varphi(x)y)=0$ and $\varphi(x)<1$, there is a unique positive root $\alpha(x)=(1-\varphi(x))/2$.
If $w_U(x-\varphi(x)y)=0$ and $\varphi(x)\ge1$ (i.e., if $x\in(\Ker\varphi\cap\cc U)+[1,+\infty)y$), then there is no positive root. This shows there is a unique $\alpha=\alpha(x)$ and it follows from the first equation in \eqref{eq:fifth} that there is also a unique $z=z(x)$. This completes the proof of Step 1. Moreover, the computation shows that the mappings $x\mapsto \alpha(x)$ and $x\mapsto z(x)$ are continuous on $X\setminus ((\Ker\varphi\cap\cc U)+[1,+\infty)y)$.

\medskip

\noindent{\sc Step 2.} $H$ is a homeomorphism of $X\setminus ((\Ker\varphi\cap\cc U)+[1,+\infty)y)$ onto itself.

\smallskip

It is clear that $H$ is a bijection of the mentioned set onto itself.
Further, the formulas for $H$ and $H^{-1}$ are the same as in the previous case. Of course, $z(x)$, $\alpha(x)$ and $c(z(x))$ are given by different formulas, but since these mappings are continuous, we get that $H$ and $H^{-1}$ are continuous.

\medskip

\noindent{\sc Step 3.} $H$ is a homeomorphism of $X$ onto itself.

\smallskip

Since $H$ is defined as the identity on the set $(\Ker\varphi\cap\cc U)+[1,+\infty)y$ and this set is closed in $X$, it is enough to show the following: Let $(x_\tau)$ be a net in $X\setminus ((\Ker\varphi\cap\cc U)+[1,+\infty)y)$ converging to some $x\in (\Ker\varphi\cap\cc U)+[1,+\infty)y$.
Then $H(x_\tau)\to x$ and $H^{-1}(x_\tau)\to x$. So, let us have such a net.

We have $x_\tau=c(z(x_\tau))+\alpha(x_\tau)(z(x_\tau)-c(z(x_\tau)))$. We will show that for $\tau$ large enough $\alpha(x_\tau)\le\frac1{2w_{U-c(z(x_\tau))}(z(x_\tau)-c(z(x_\tau)))}$. Then the proof will be completed as it will follow that for $\tau$ large enough $H(x_\tau)=H^{-1}(x_\tau)=x_\tau$.

The desired inequality is equivalent to $w_{U-c(z(x_\tau))}(z(x_\tau)-c(z(x_\tau)))\le\frac1{2\alpha(x_\tau)}$, i.e.
$c(z(x_\tau))+2\alpha(x_\tau)(z(x_\tau)-c(z(x_\tau)))\in U$, equivalently
$$c(z(x_\tau))+2(x_\tau-c(z(x_\tau)))\in U.$$

Let us analyze the limit behaviour of the left-hand side. Set $t_\tau=\varphi(x_\tau)$ and $a_\tau=x_\tau-t_\tau y$. Then $t_\tau\to\varphi(x)$ and $a_\tau\to x-\varphi(x)y$, hence $w_U(a_\tau)\to 0$. 
We have
$$c(z(x_\tau))=(w_U(z(x_\tau)+y)+1)y=(\frac{w_U(a_\tau)}{\alpha(x_\tau)}+1)y.$$
If $w_U(a_\tau)=0$, then $c(z(x_\tau))=y$. Moreover, $t_\tau<1$, hence $\varphi(x)\le 1$, so necessarily 
$\varphi(x)=1$. It follows that $c(z(x_\tau))=\varphi(x)y$.

If $w_U(a_\tau)>0$, then $\alpha(x_\tau)$ is the positive root of the quadratic equation from Step 1, 
so 
$$\frac{w_U(a_\tau)}{\alpha(x_\tau)}=\frac12(\sqrt{(t_\tau-1 +w_U(a_\tau))^2+8w_U(a_\tau)}+t_\tau-1+w_U(a_\tau))\to \varphi(x)-1.$$
It follows that $c(z(x_\tau))\to\varphi(x)y$, hence
$$c(z(x_\tau))+2(x_\tau-c(z(x_\tau)))\to \varphi(x)y+2(x-\varphi(x)y).$$
Since $x-\varphi(x)y\in\cc U$, $y\in\cc U$ and $\varphi(x)\ge 1$, we get 
 $\varphi(x)y+2(x-\varphi(x)y)\in\cc U\subset\Int U$. Hence 
$c(z(x_\tau))+2(x_\tau-c(z(x_\tau)))\in U$ for $\tau$ large enough and the proof is completed.

\begin{remark} It took us some time to discover that the proof in \cite{BePe} is incorrect. As remarked above, the error is already in the second step, since the assignment $z\mapsto u(z)$ fails to be continuous. The correction from Section~\ref{s:C1} is quite complicated but we find it interesting since it uses
some balance of asymptotic behaviour. The correction from Section~\ref{s:C2} is much simpler and now, a posteriori, we are convinced that this is the formula the authors had in mind. But it is still more complicated than the original proof, the main difference is in Step 3. While in the original version Step 3 is trivial, in the method described in Section~\ref{s:C2} Step 3 requires some nontrivial computation.
At least we do not see how to prove it without any computation like in the original version.
\end{remark}

\section{Topological version of Dobrowolski's proof}\label{s:D} 

The approach of \cite{Dob} is a bit different, it focuses on smooth bodies in Banach spaces and refers to the implicit function theorem. As remarked above, the proof is extremely consise and missing computations (checking the assumptions of the implicit function theorem) is nontrivial -- it would be much longer than the proof itself. 
In this section we give a modification of the proof from \cite{Dob} which works simultaneously in the topological and the smooth cases. Our version is moreover simplified and more elementary. In particular, it uses a simpler version of the implicit function theorem (not only its proof is simpler, but the assumptions are easier to check) and the form of our formula is simpler (although the mapping is the same) since we use Minkowski functional related to only one convex body.  We will give the proof in the topological case and then comment why it works also in the smooth case. 

Firstly, let us choose two auxiliary $C^\infty$ functions $\lambda$ and $\gamma$ defined on $\er$ with the following properties:
\begin{itemize}
	\item $\lambda$ is non-decreasing, $\lambda=0$ on $(-\infty,\frac12]$, $\lambda=1$ on $[1,+\infty)$.
	\item $\gamma=0$ on $(-\infty,\frac12]$, $\lim_{t\to\infty}\gamma(t)=+\infty$ and $0\le\gamma'(t)<\frac1t(\gamma(t)+1)$ for $t>0$.
\end{itemize}
The existence of $\lambda$ is a well-known fact. The existence of $\gamma$ is not obvious and in \cite{Dob} it is just postulated. One can take, for example, $\gamma(t)=\delta\lambda(t)\ln(t+1)$
for $t>-1$, where $\delta>0$ is a small enough number and complete this function by zero on the rest of $\er$. 

In the proof we will need the following version of the implicit function theorem.

\begin{thm}\label{IFT} Let $X$ be a topological space, $\Omega\subset X\times \er$ an open set,
$F=F(x,t):\Omega\to \er$ a function and $(x_0,t_0)\in \Omega$. Suppose that the following assumptions
are satisfied.
\begin{itemize}
	\item $F$ and $\frac{\partial F}{\partial t}$ are continuous on $\Omega$.
	\item $F(x_0,t_0)=0$.
	\item  $\frac{\partial F}{\partial t}(x_0,t_0)\ne0$.
\end{itemize}
Then there is $G$, a neighborhood of $x_0$ in $X$, and $H$, a neighborhood of $t_0$ in $\er$ and a continuous function $f:G\to H$ such that $G\times H\subset\Omega$ and for $(x,t)\in G\times H$ one has $t=f(x)$ if and only if $F(x,t)=0$.
\end{thm}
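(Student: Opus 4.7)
The plan is to avoid any Banach-space machinery and treat this purely as a one-dimensional existence/uniqueness argument for the real variable $t$, using only the intermediate value theorem and strict monotonicity. Without loss of generality I would assume $\frac{\partial F}{\partial t}(x_0,t_0)>0$ (the other case is symmetric). By continuity of $\frac{\partial F}{\partial t}$ on $\Omega$, I can pick an open neighborhood $G_0\times(t_0-\eta,t_0+\eta)\subset\Omega$ of $(x_0,t_0)$ on which $\frac{\partial F}{\partial t}>0$, so in particular $F(x,\cdot)$ is strictly increasing on $(t_0-\eta,t_0+\eta)$ for every fixed $x\in G_0$.

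Next, choose $0<\delta<\eta$. Since $F(x_0,\cdot)$ is strictly increasing on $(t_0-\eta,t_0+\eta)$ and vanishes at $t_0$, one has $F(x_0,t_0-\delta)<0<F(x_0,t_0+\delta)$. By continuity of $F$ in the product topology, the sets $\{x\in G_0:F(x,t_0-\delta)<0\}$ and $\{x\in G_0:F(x,t_0+\delta)>0\}$ are open neighborhoods of $x_0$; let $G$ be their intersection and set $H=(t_0-\delta,t_0+\delta)$. For any $x\in G$ the function $t\mapsto F(x,t)$ is continuous on $[t_0-\delta,t_0+\delta]$, strictly increasing, and changes sign, so by the intermediate value theorem there is exactly one $t\in H$ with $F(x,t)=0$. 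Denote it by $f(x)$; this gives a well-defined function $f:G\to H$ with the stated equivalence.

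It remains to verify continuity of $f$. Fix $x\in G$ and $\varepsilon>0$; by shrinking we may assume $(f(x)-\varepsilon,f(x)+\varepsilon)\subset H$. By strict monotonicity of $F(x,\cdot)$ we have $F(x,f(x)-\varepsilon)<0<F(x,f(x)+\varepsilon)$, and continuity of $F$ yields an open neighborhood $U\subset G$ of $x$ on which both inequalities $F(\cdot,f(x)-\varepsilon)<0$ and $F(\cdot,f(x)+\varepsilon)>0$ persist. For $x'\in U$ the unique zero $f(x')$ of $F(x',\cdot)$ in $H$ must then lie in $(f(x)-\varepsilon,f(x)+\varepsilon)$, proving $f$ is continuous at $x$.

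I do not expect any serious obstacle: the only nontrivial step is the final continuity verification, and even there the strict monotonicity in $t$ makes the argument essentially the classical one-variable proof of the implicit function theorem, with the topological-space hypothesis on $X$ entering only through the fact that preimages of open sets under $F$ are open in $X\times\er$.
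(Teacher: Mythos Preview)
Your proof is correct and is exactly the argument the paper has in mind: the paper does not spell out a proof of Theorem~\ref{IFT} but only remarks that it ``can be proved by the same way as the easiest version for $C^1$ functions from $\er^2$ to $\er$,'' and your intermediate-value/strict-monotonicity argument is precisely that classical proof, with the product-topology observation that $G_0\times(t_0-\eta,t_0+\eta)$ can be chosen inside $\Omega\cap\{\partial F/\partial t>0\}$ being the only (harmless) point where the general topological space $X$ enters.
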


This theorem follows from a more general \cite[Chapter III, Section 8, Theorem 25]{Schw} (which deals with a normed space in place of $\er$). However, our version is much simpler and can be proved by the same way as the easiest version
for $C^1$ functions from $\er^2$ to $\er$.

Let us now start the construction itself.  For $z\in Z$ let $c(z)=\gamma(w_U(y+z))y$.

\medskip

\noindent{\sc Step 1:} The mapping $\Phi: (\alpha,z)\mapsto c(z)+\alpha(z-c(z))$ is a homeomorphism of $(0,+\infty)\times Z$ onto $X\setminus ((\cc U\cap \Ker\varphi)+[0,+\infty)y)$.

\smallskip

Note that in \cite{Dob} there is a small error, where instead of $\cc U\cap \Ker\varphi$ the author writes $\{0\}$. We proceed in the same way as above. Fix $x\in X$ and try to find $\alpha>0$ and $z\in Z$ such that $\Phi(\alpha,z)=x$. This equation is equivalent to
$$\alpha(z+y)+((1-\alpha)\gamma(w_U(z+y))-\alpha)y=(x-\varphi(x)y)+\varphi(x)y,$$
hence by applying $\varphi$ to both sides we get (similarly as in ``\eqref{eq:first}$\implies$\eqref{eq:second}'' above)
$$\alpha(z+y)=x-\varphi(x)y\quad\&\quad(1-\alpha)\gamma(w_U(z+y))-\alpha=\varphi(x).$$
If we isolate $z+y$ from the first equation and plug it in the second one,
we get
$$(1-\alpha)\gamma\left(\frac1\alpha w_U(x-\varphi(x)y)\right)-\alpha-\varphi(x)=0.$$
Denote the left-hand side by $F(x,\alpha)$. It is clear that $F$ is defined and continuous on $X\times(0,+\infty)$ and, moreover,
$$\frac{\partial F}{\partial \alpha}(x,\alpha)=
-\gamma\left(\frac1\alpha w_U(x-\varphi(x)y)\right)-\frac{1-\alpha}{\alpha^2}w_U(x-\varphi(x)y)
\gamma'\left(\frac1\alpha w_U(x-\varphi(x)y)\right)-1,$$
which is also continuous on $X\times(0,+\infty)$.

Further, $\frac{\partial F}{\partial \alpha}(x,\alpha)<0$ for $(x,\alpha)\in X\times(0,+\infty)$.
Indeed, if $w_U(x-\varphi(x)y)=0$, then $\frac{\partial F}{\partial \alpha}(x,\alpha)=-1$. If $w_U(x-\varphi(x)y)>0$
and $\alpha\le 1$, then $\frac{\partial F}{\partial \alpha}(x,\alpha)\le-1$. Finally, if $w_U(x-\varphi(x)y)>0$
and $\alpha>1$, then by the properties of $\gamma$ we get
$$\frac{\partial F}{\partial \alpha}(x,\alpha)
<-\gamma\left(\frac1\alpha w_U(x-\varphi(x)y)\right)+\frac{\alpha-1}{\alpha^2}
\left(\gamma\left(\frac1\alpha w_U(x-\varphi(x)y)\right)+1\right)-1\le0.$$

Let us continue with describing the range of $\Phi$. Fix $x\in X$. There are two possibilites:

\smallskip

Case 1: $w_U(x-\varphi(x)y)=0$. Then $F(x,\alpha)=-\alpha-\varphi(x)$. If $\varphi(x)<0$, there is a unique positive root
$\alpha=-\varphi(x)$. If $\varphi(x)\ge0$, there is no positive root.

\smallskip

Case 2: $w_U(x-\varphi(x)y)>0$. Then $\lim_{\alpha\to 0+}F(x,\alpha)=+\infty$ (as $\gamma$ has at $+\infty$ the limit $+\infty$)
and $\lim_{\alpha\to+\infty}F(x,\alpha)=-\infty$ (as $\gamma$ vanishes at a neighborhood of zero). Further, since $\alpha\mapsto F(x,\alpha)$ is continuous and strictly decreasing on $(0,+\infty)$, there is a unique root.

\smallskip

It follows that $\Phi$ is one-to-one and its range is $X\setminus ((\cc U\cap \Ker\varphi)+[0,+\infty)y)$.
It is clear that $\Phi$ is continuous. By Theorem~\ref{IFT} we get that the second coordinate of the inverse is continuous,
the continuity of the first coordinate then follows, hence $\Phi^{-1}$ is continuous.

\medskip

\noindent {\sc Step 2.} The mapping $\Psi$ defined by the formula 
$$\Psi(\alpha,z)=(\alpha\lambda(\alpha w_{U-c(z)}(z-c(z))) (w_{U-c(z)}(z-c(z))-1)
 + \alpha,z)$$
is a homeomorphism of $(0,+\infty)\times Z$ onto itself. 

\smallskip

Since $w_{U-c(z)}(z-c(z))\ge 1$ whenever $z\in Z$, $\Psi$ maps $(0,+\infty)\times Z$ into itself. $\Psi$ is clearly continuous. To show that $\Psi$ is a bijection and the inverse is continuous, let us investigate the first coordinate, i.e., the mapping $$\theta(\alpha,z)=\alpha\lambda(\alpha w_{U-c(z)}(z-c(z))) (w_{U-c(z)}(z-c(z))-1)  + \alpha.$$
We have
\begin{multline*}\frac{\partial}{\partial\alpha}\theta(\alpha,z)
= (	\lambda(\alpha w_{U-c(z)}(z-c(z)))\\+\alpha\lambda'(\alpha w_{U-c(z)}(z-c(z)))(w_{U-c(z)}(z-c(z))))(w_{U-c(z)}(z-c(z))-1)+1.\end{multline*}
This partial derivative is continuous and strictly positive on $(0,+\infty)\times Z$.
Moreover, for any $z\in Z$ we have
$$\lim_{\alpha\to0+}\theta(\alpha,z)=0 \mbox{ and }\lim_{\alpha\to+\infty}\theta(\alpha,z)=+\infty,$$
hence $\Psi$ is a bijection. Moreover, the continuity of $\Psi^{-1}$ follows from 
Theorem~\ref{IFT}. This completes the proof of Step 2.
 
\medskip
 
\noindent{\sc Step 3.} The mapping $H=\Phi\circ\Psi\circ\Phi^{-1}$ is a homeomorphism of $X\setminus ((\cc U\cap \Ker\varphi)+[0,+\infty)y)$ onto itself. Moreover, 
it maps each halfline $c(z)+\er^+(z-c(z))$ onto itself in an increasing manner such that the segment $(c(z),u(z)]$ is mapped onto $(c(z),z]$.

\smallskip

Indeed, $H$ is a homeomorphism as a composition of homeomorphisms. Further, from the construction it is clear that it preserves the mentioned halflines in an increasing manner. The last thing to show is that $H(u(z))=z$. To show this notice first
that $\Phi^{-1}(u(z))=(\frac1{w_{U-c(z)}(z-c(z))},z)$, hence
$\Psi(\Phi^{-1}(u(z))=(1,z)$, thus $H(u(z))=z$.

\medskip

\noindent{\sc Step 4.} If we extend $H$ by identity on $(\cc U\cap \Ker\varphi)+[0,+\infty)y$ we get a homeomorphism of $X$ onto itself with the required properties.

\smallskip

Since $(\cc U\cap \Ker\varphi)+[0,+\infty)y$ is closed, it is enough to check that $H$ and $H^{-1}$ are continuous at points of this set. So, let us fix $x$ in this set
and a net $x_\tau$ in the complement converging to $x$. Let $(\alpha_\tau,z_\tau)=\Phi^{-1}(x_\tau)$. Let us first show that $\alpha_\tau\to 0$.
Suppose not. Then, up to passing to a subnet, we may assume that $\alpha_\tau\to\alpha\in(0,+\infty]$.
 We have
$$\varphi(x)=\lim_\tau\varphi(x_\tau)=\lim_{\tau}(1-\alpha_\tau)\gamma\left(\frac1{\alpha_\tau} w_U(x_\tau-\varphi(x_\tau)y)\right)-\alpha_\tau.$$
If $\alpha=+\infty$, then the limit on the right-hand side is $-\infty$ (since $\gamma$ is zero on a neighborhood of zero), which is not possible. If $\alpha\in(0,+\infty)$, then the right-hand side goes to $-\alpha$
(since $w_U(x_\tau-\varphi(x_\tau)y)\to w_U(x-\varphi(x)y)=0$). Thus $\varphi(x)<0$, a contradiction.

So, we have proved that $\alpha_\tau\to 0$. Further,
$$c(z_\tau)=\gamma(w_U(z_\tau+y))y =\gamma\left(\frac1{\alpha_\tau} w_U(x_\tau-\varphi(x_\tau)y)\right)y
=\frac{\varphi(x_\tau)+\alpha_\tau}{1-\alpha_\tau}y\to\varphi(x)y.$$
Hence $w_{U-c(z_\tau)}(x_\tau-c(z_\tau))\to w_{U-\varphi(x)y}(x-\varphi(x)y)$ by Lemma~\ref{l:Mink}.
But the latter value is zero, since $w_{U}(x-\varphi(x)y)=0$ and $y\in\cc U$, so for each $t>0$ we have $t(x-\varphi(x)y)+\varphi(x)y\in U$.
So, for $\tau$ large enough we have
$\frac1\alpha w_{U-c(z_\tau)}(z_\tau-c(z_\tau))= w_{U-c(z_\tau)}(x_\tau-c(z_\tau))<\frac12,$
hence $\Psi(\alpha_\tau,z_\tau)=(\alpha_\tau,z_\tau)$.
Finally, for those $\tau$ we have
$H(x_\tau)=H^{-1}(x_\tau)=x_\tau$.

This completes the proof.

\begin{remark} In case $X$ is a Banach space a $U$ is a $C^p$-smooth convex body (where $p\in\en\cup\{\infty\}$), the homeomorphism $H$ constructed above is a $C^p$-diffeomorphism. Indeed, first remark that, if in Theorem~\ref{IFT} we moreover assume that $X$ is a Banach space and $F$ is $C^p$-smooth, then $f$ is also $C^p$-smooth. Further, in this case the function from Lemma~\ref{l:Mink} is $C^p$-smooth on the complement of its zero set by \cite[Lemma 1]{Dob}. (The proof of this lemma is omitted in \cite{Dob}, but it is an easy consequence of the definition.) Further, the function $F$ used in Step 1 is $C^p$-smooth on $X\times(0,+\infty)$ (at points where $w_U(x_0-\varphi(x_0)y)>0$ this is a composition of $C^p$-functions mentioned above; if  $w_U(x_0-\varphi(x_0)y)=0$, then $F(x,\alpha)=-\alpha-\varphi(x)$ on a neighborhood of $(x_0,\alpha_0)$). It follows that $\Phi$ is a $C^p$-diffeomorphism. Similarly we can see that the mapping $\Psi$ from Step 2 is a $C^p$-diffeomorphism. Finally, from the proof of Step 4 we see that
for each point from $(\cc U\cap \Ker\varphi)+[0,+\infty)y$ there is a neighborhood on which $H$ is the identity, so $H$ is a $C^p$-diffeomorphism.\end{remark}

\def\cprime{$'$}

\end{document}